\numberwithin{equation}{section}
\newtheorem{theorem}{Theorem}[section]
\newtheorem{defi}[theorem]{Definition}
\newtheorem{exam}[theorem]{Example}
\newtheorem{rema}[theorem]{Remark}
\author[S. Forcey]{Stefan Forcey} \address[S. Forcey]{
    Department of Mathematics\\
    The University of Akron\\
    Akron, OH 44325-4002
    }
    \email{sf34@uakron.edu}  \urladdr{http://www.math.uakron.edu/\~{}sf34/}
\author[M. Kafashan]{Mohammadmehdi Kafashan} \address[M. Kafashan]{
    Department of Electrical Engineering\\
    The University of Akron\\
    Akron, OH 44325-4002
    }
\author[M. Maleki]{Mehdi Maleki} \address[M. Maleki]{
    Department of Electrical Engineering\\
    The University of Akron\\
    Akron, OH 44325-4002
    }
\author[M. Strayer]{Michael Strayer} \address[M. Strayer]{
    Department of Mathematics\\
    The University of Akron\\
    Akron, OH 44325-4002
    }
\title{Recursive bijections for Catalan objects.}
\keywords{Catalan, bijections} \subjclass[2000]{05E05, 16W30, 18D50}
\begin{document}

\begin{abstract}
 In this note we introduce several instructive examples of bijections found between
several different combinatorially defined sequences of sets. Each
sequence has cardinalities given by the Catalan numbers. Our results
answer
 some questions posed by R. Stanley in the addendum to his textbook. We actually discuss two
types of bijection, one defined recursively and the other defined in
a more local, relative, fashion. It is interesting to compare the
results of the two.
\end{abstract}

\maketitle
\section{Introduction}

\subsection{Catalan objects}  In the work that led to this note we set out
to find explicit bijections between several sequences of sets that are known to
be counted by the Catalan numbers, sequence A000108 in \cite{sloane}. One
sequence of sets we call the \emph{right-swept planar unary-binary trees}, or
\emph{right-swept trees} for short. These are the same restriction of planar
unary-binary trees that are labeled as example ``$www$'' in R. Stanley's
Catalan Addendum (version of July 2012) \cite{Stan}. They are also described in
\cite{kim} as a special kind of planar unary-binary trees, and there is given
in that article a bijection to the non-crossing partitions. We were inspired to
find bijections from these right-swept trees to other familiar sets of objects
counted by the Catalan numbers, due to the fact that they have a nice recursive
description that is different from the standard Catalan recursion. In this
paper we find bijections from the right-swept trees to staircase tilings,
planar trees, planar binary trees and arc tree diagrams, allowing the reader to
construct many more implied bijections to non-crossing partitions, polygonal
dissections and lattice paths. Our first set of recursive bijections is
described in Section~\ref{sec:rec}. Our second bijection between staircase
tilings and right-swept trees is discussed in Section~\ref{sec:rel}.

A \emph{right-swept tree} is a rooted planar tree with the following
restrictions. In general a node may be a leaf, may have a single child which
must be left, middle, or right; or instead may have two children: left and
right. Any left child has further restrictions: it may not be a leaf, and it
may not have a middle child. Thus any branching to the left is eventually swept
right before it can end in a leaf. Figure~\ref{rightswept} shows a right-swept
tree.

\begin{figure}[!ht] 
   \centering
   \includegraphics[width=\textwidth]{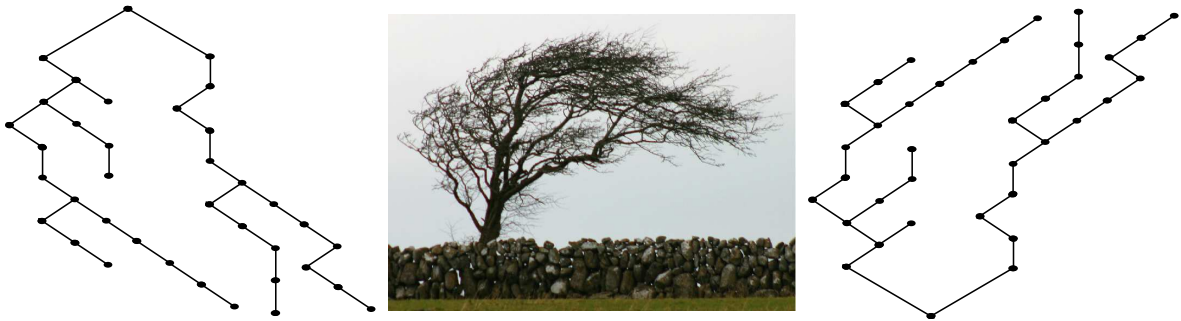}
   \caption{Two views of a right-swept tree. We will use the first, with root at the top. The
    (remixed) photo is a windswept hawthorne near Galway, original taken by Eoin Gardiner (creative commons).}
   \label{rightswept}
\end{figure}

Our second featured sequence is known as the \emph{diagonal rectangular tilings
of staircase shapes}, or \emph{staircase tilings} for short. A staircase shape
is the outline of a Young diagram corresponding to a partition given by
$(n,n-1,\dots,1).$  The Catalan numbers count tilings whose rectangles each
include some of the stepped diagonal--i.e. each intersects the end of a row in
the Young diagram. These are equivalently described as rectangular tilings of
height $n$ staircase shapes that contain exactly $n$ rectangles. The fact that
having $n$ rectangles is equivalent to being a diagonal tiling is also true for
diagonal rectangulations of the square, and we refer the reader to
\cite{reading-direc} both for a proof and for some very nice related
combinatorics. The staircase tilings are also referred to as tilings of
stair-step shapes, as in \cite{mernik}.

There is a well known bijection from staircase tilings to the sets of rooted
planar binary trees. Simply removing the ``steps,'' the vertical and horizontal
boundary segments of unit length at the far right and bottom of the figure, and
adding a root, yields a binary tree (whose drawing has been rotated from its
normal presentation.)  A staircase tiling and its corresponding binary tree is
shown in Figure~\ref{stairs}.

\begin{figure}[!ht] 
   \centering
   \includegraphics[width=\textwidth]{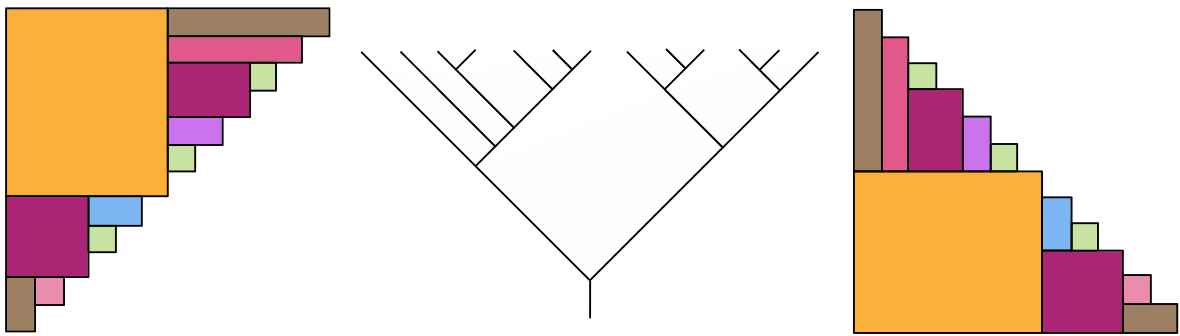}
   \caption{Two views of a staircase tiling. We will use the version on the left.
   The
    binary tree in the center is the image of the staircase tiling under the classical bijection: it is formed by removing the steps.
    We will draw rooted binary trees with the root at the bottom.}
   \label{stairs}
\end{figure}

The bijection exemplified in Figure~\ref{stairs} is trivially described in
recursive terms. A planar binary tree $t$ with more than one leaf (and thus the
corresponding staircase tiling) is formed by joining a pair of smaller binary
trees--the left and right subtrees whose root is the first branch point of $t.$
Many other Catalan objects have a similar recursive description--triangular
dissections of a polygon, bracketings  of a string of symbols, and Dyck paths,
to name a few. This description leads to Segner's classic recursion relation
for the Catalan numbers $c_n$:
\begin{equation}
{c_0} = 1\,\,\,and\,\,{c_{n + 1}} = \sum\limits_{k = 0}^n {{c_k}{c_{n - k}}}
\,\,\,\,\,for\,\,\,n > 0,  \label{Segner}
\end{equation}
...where $n$ is the number of branch points for the binary tree, or the number
of rectangles in the staircase tiling. The recursion yields the closed formula:
$$
c_n = \frac{1}{n+1}{2n \choose n}.
$$

If $X_n$ and $ X'_n$ are any two of the sequences of sets that have Segner's
recursive description then they are in piecewise bijection (both counted by
$c_n$), and the correspondence is explicitly described using the recursion. If
a bijection is given between the $k^{th}$ sets of the two sequences, for
$k=1\dots n,$ then given an object of $X_{n+1}$ we can decompose it into two
objects from earlier in the sequence, find their corresponding objects and use
them to construct the corresponding object in $X'_{n+1}.$

We began by seeking similar explicit bijections between the staircase tilings
and the right-swept trees. We found two such bijections. The first bijection
discussed in Section~\ref{sec:rec} is based on an alternate recursive
description of the staircase tilings, which fits well with the natural
recursive description of the right-swept trees.  By finding analogous ways to
recursively construct other sorts of Catalan objects we can describe them as
being in bijection with the right-swept trees, and each other, in new ways. As
an example we include \emph{non-crossing arc diagrams with distinct left
endpoints}, or \emph{arc trees} for short.

 \emph{Arc trees} are defined to be the ways of connecting $n+1$
points lying on a horizontal line on the plane with $n$ non-crossing arcs lying
above the line such that the left endpoints of the arcs are distinct. There is
always a unique series of arcs traveled from left to right from any point to
the rightmost point. Thus there is always a unique shortest path to travel from
one point to another. These are easily seen to be in bijection with planar
rooted trees with $n$ edges, simply by choosing the rightmost point to be the
root and then straightening the arcs. See Figure \ref{tree_color}.

\begin{figure}[!ht] 
   \centering
   \includegraphics[width=\textwidth]{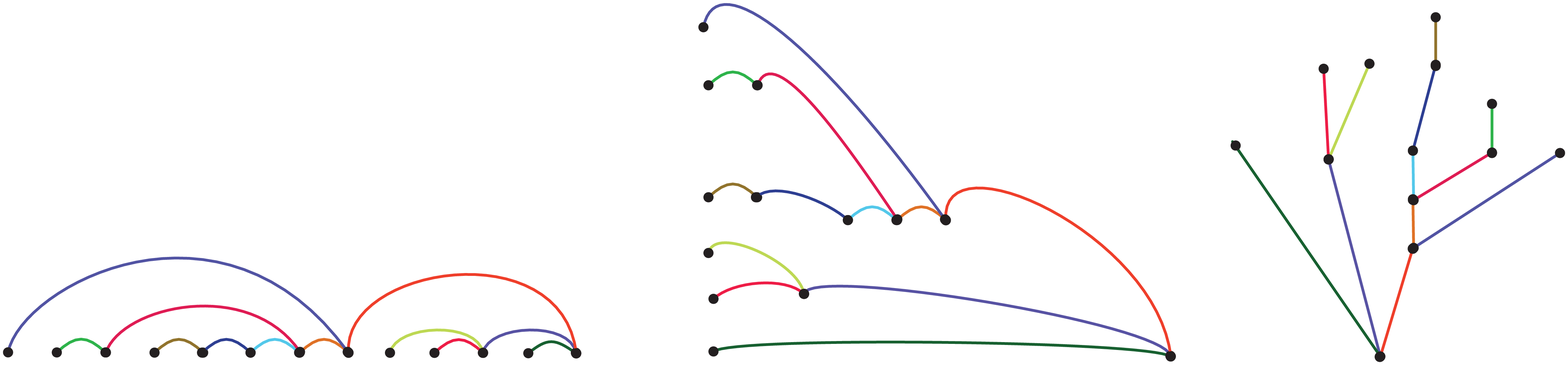}
   \caption{The bijection from non-crossing arc diagrams with distinct left endpoints to planar rooted trees:
    from left to right we gradually straighten the arcs. We will draw planar rooted trees with the root at the bottom
     (as opposed to the right-swept trees with their root at the top.)}
   \label{tree_color}
\end{figure}

\section{Recursive bijections}\label{sec:rec}

As mentioned, in order to keep this paper self contained, we have repeated the
definitions given in R. Stanley's Catalan addendum (version of 13 July 2012) to
\cite{Stan} of the combinatorial objects $www$, $h^8$, and $h^5$: called here
respectively the right-swept trees, staircase tilings and arc trees.

We represent the set of right-swept trees with $ n $ nodes as $
{{\bf{T}}_n} $, the set of stair-case tilings with $n$ rectangles as
$ {{\bf{S}}_n} $ and the set of arc trees with $n$ arcs as $
{{\bf{A}}_n}.$ We refer to the sets as the shapes of size $n.$ The
five objects for size $n=3$ are seen in Figures~\ref{www},~\ref{e^8}
and ~\ref{f^5}.

\begin{figure}[!ht] 
   \centering
   \includegraphics[scale=.28]{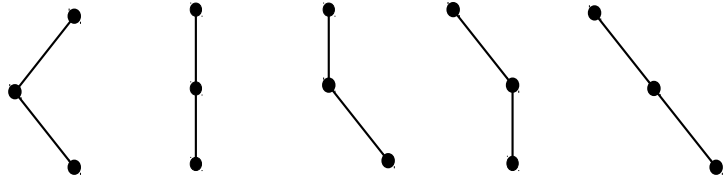}
   \caption{Right-swept trees ${{\bf{T}}_3}.$}
   \label{www}
\end{figure}

\begin{figure}[!ht] 
   \centering
   \includegraphics[scale=.22]{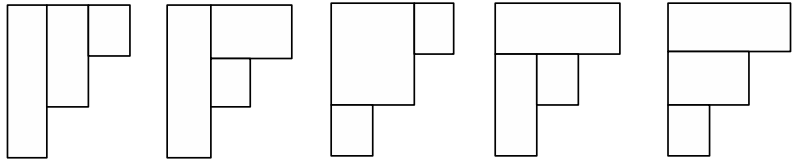}
   \caption{Staircase tilings $ {{\bf{S}}_3} $. }
   \label{e^8}
\end{figure}

\begin{figure}[!ht] 
   \centering
   \includegraphics[width=\textwidth]{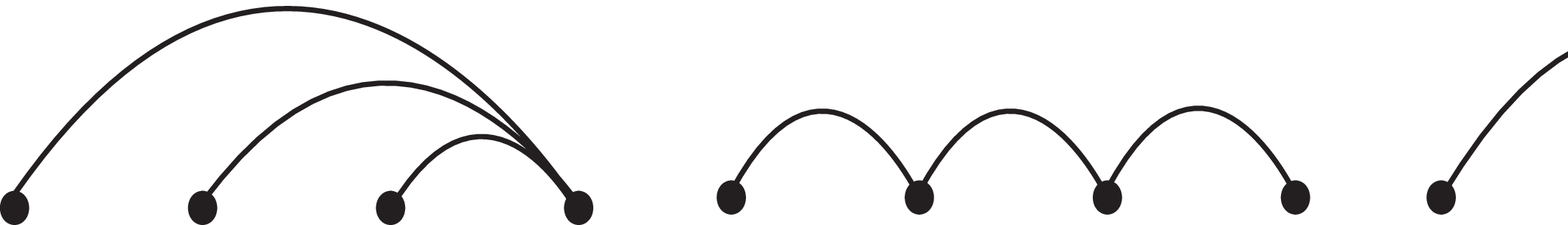}
   \caption{(Non crossing) arc trees $ {{\bf{A}}_3}. $}
   \label{f^5}
\end{figure}

 Here we introduce a
recursive method to construct a shape with size of $ n+1 $ in any of
these three combinatorial sets using shapes of smaller size. We
construct four different types of shapes of size $ n+1 $ using four
methods:
\\
\begin{enumerate}
\item We define  functions
${f_R}:{{\bf{X}}_n} \to {{\bf{X}}_{n + 1}},\,\,{\bf{X}} \in \left\{
{{\bf{T}},{\bf{S}},{\bf{A}}} \right\},\,\,n \ge 1 \in  \mathbb{N} $. Depending
on which combinatorial object is the input to this function, we perform the
following procedures:\\

a) $ {\bf{X}} = {\bf{T}} $: In this case, the output in ${{\bf{T}}_{n + 1}} =
{f_R}\left( {{{\bf{T}}_n}} \right)$ is a right-swept tree with $ n+1 $ vertices
constructed by adding one vertex to $t\in {{\bf{T}}_n}$ as the new root whose
right child is the
root of $t$.\\

b) $ {\bf{X}} = {\bf{S}} $: In this case, the output in $ {{\bf{S}}_{n + 1}} =
{f_R}\left( {{{\bf{S}}_n}} \right) $ is a staircase tiling with $ n+1 $
rectangles constructed by adding one $\left( {(n + 1) \times 1} \right)$
rectangle to the
left side of  an input from ${{\bf{S}}_n}$.\\

c) $ {\bf{X}} = {\bf{A}} $: In this case, the output in ${{\bf{A}}_{n + 1}} =
{f_R}\left( {{{\bf{A}}_n}} \right)$ is is an arc tree with $ n+2 $ points
constructed by adding one point to the left side of $a\in{{\bf{A}}_n}$ and
connecting it to the
nearest point in $a$.\\

Figure~\ref{rightchild}  represents the operation of ${f_R}$ whose input can be
any possible shape with size of $ n $. Thus the number of shapes of size $ n+1
$ which can be constructed by ${f_R}$ is denoted ${c_{n + 1,1}} = {c_n}$.

\begin{figure}[!ht] 
   \centering
   \includegraphics[scale=.5]{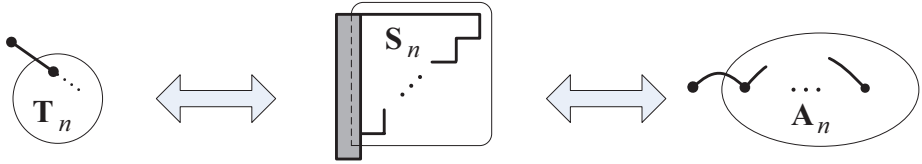}
   \caption{Construction of ${{\bf{X}}_{n + 1}}$ using ${f_R}.$}
   \label{rightchild}
\end{figure}

\item We define functions ${f_M}:{{\bf{X}}_n} \to {{\bf{X}}_{n + 1}},\,\,{\bf{X}} \in \left\{ {{\bf{T}},{\bf{S}},{\bf{A}}} \right\},\,\,n \in
\mathbb{N}$, and for the case $n=0.$
 Depending on which combinatorial object is the input to this function, we perform the following
 procedures:\\

a) $ {\bf{X}} = {\bf{T}} $: In this case, the output in
${{\bf{T}}_{n + 1}} = {f_M}\left( {{{\bf{T}}_n}} \right)$ is a tree
with $ n+1 $ vertices constructed by adding one vertex to
$t\in{{\bf{T}}_n}$ as the new root whose middle child is
the root of $t$. We define ${\bf{T}}_0$ to be $\{\emptyset\}$ and define the single element of ${\bf{T}}_1$ as $f_M(\emptyset).$ \\

b) $ {\bf{X}} = {\bf{S}} $: In this case, the output in $
{{\bf{S}}_{n + 1}} = {f_M}\left( {{{\bf{S}}_n}} \right) $
 is a
tiling with $ n+1 $ rectangles constructed by removing the left edge of
$s\in{{\bf{S}}_n}$, extending $s$ one column to the left and then adding one
single square to the bottom of the new column.
We define ${\bf{S}}_0$ to be $\{\emptyset\}$ and define the single element of ${\bf{S}}_1$ as $f_M(\emptyset).$ \\

c) $ {\bf{X}} = {\bf{A}} $: In this case, the output in ${{\bf{A}}_{n + 1}} =
{f_M}\left( {{{\bf{A}}_n}} \right)$ is an arc tree with $ n+2 $ points
constructed by adding one point to the left side of $a\in{{\bf{A}}_n}$ and
connecting it to the farthest point in $a$.
We define ${\bf{A}}_0$ to consist of a single point and define the single element of ${\bf{A}}_1$ as the image of that point under $f_M.$ \\

 Figures~\ref{midchild} and~\ref{midexamp} represent the operation of ${f_M}$ whose input can be any possible shape with size of $ n $. Thus the
number of shapes of size $ n+1 $ which can be constructed by ${f_M}$ is denoted ${c_{n + 1,2}} = {c_n}$.\\

\begin{figure}[!ht] 
   \centering
   \includegraphics[scale=.5]{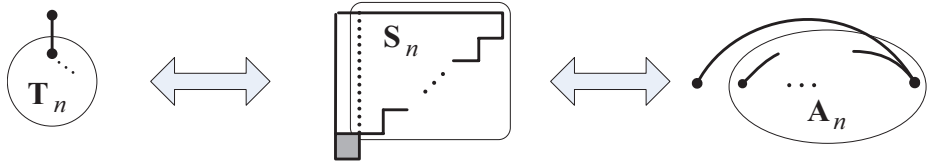}
   \caption{Construction of ${{\bf{X}}_{n + 1}}$ using ${f_M}.$}
   \label{midchild}
\end{figure}

\begin{figure}[!ht] 
   \centering
   \includegraphics[scale=.4]{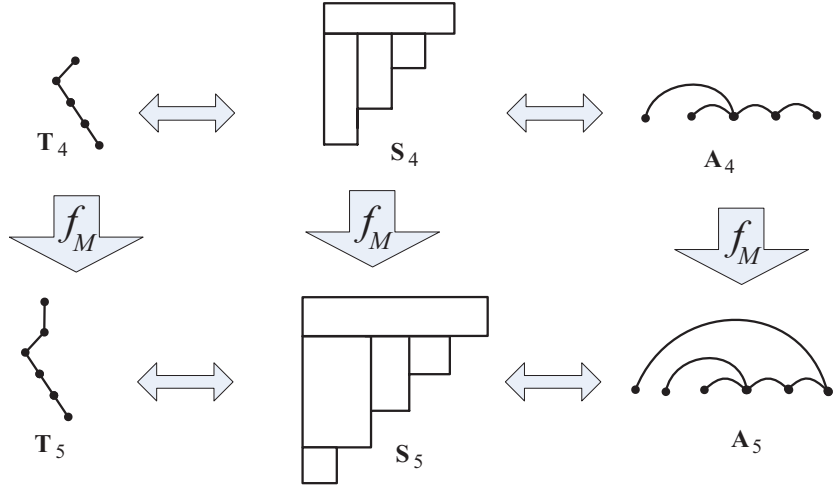}
   \caption{Examples of the construction of $ {{\bf{X}}_{n + 1}}$ using ${f_M}.$}
   \label{midexamp}
\end{figure}

\item We define functions
 ${f_L}:\left({{\bf{X}}_n}-f_M({\bf{X}}_{n - 1})\right) \to {{\bf{X}}_{n + 1}},\,\,{\bf{X}} \in \left\{ {{\bf{T},{\bf{S}},{\bf{A}}}} \right\},\,\,n \ge 1 \in \mathbb{N}$.
 Depending on which combinatorial object is the input to this function, we perform the following
 procedures:\\

a) $ {\bf{X}} = {\bf{T}} $: In this case, the output of ${f_L}$ is a
tree with $ n+1 $ vertices constructed by adding one vertex to a
size $n$ right-swept tree $t$ as the new root whose left child is
the root of
$t$. Here the original root of $t$ will not have a middle child.\\

b) $ {\bf{X}} = {\bf{S}} $: In this case, the output of ${f_L}$
 is a tiling
with $ n+1 $ rectangles constructed by adding one $\left( {(n + 1) \times 1}
\right)$ rectangle to the top of a shape from ${{\bf{S}}_n}$.
 Here
$s\in{{\bf{S}}_n}$ should not have a single square as its lowest tile. In other
words $s$ should not be constructed by ${f_M}\left(
{{{\bf{S}}_{n - 1}}} \right)$.\\

c)  $ {\bf{X}} = {\bf{A}} $: In this case, the output of ${f_L}$ is an arc tree
with $ n+2 $ points constructed by adding one point to the left side of a size
$n$ arc tree $a$ and connecting it to the second nearest point (but not the
rightmost one) in $a$ such that the connection does not intersect with any
other arc in $a$. Notice that this is impossible if the input arc tree has an
arc between its first and last points. In other words $a$ should not be
constructed by ${f_M}\left({{{\bf{A}}_{n - 1}}} \right)$.\\
\\

Figure~\ref{leftchild}  represents the operation of ${f_L}$ whose input can be
any possible shape with size of $ n $ except the shapes constructed by
${f_M}\left( {{{\bf{X}}_{n - 1}}} \right)$. Thus the number of shapes of size $
n+1 $ which can be constructed by ${f_L}$ is denoted ${c_{n + 1,3}} = {c_n}-{c_{n-1}}$.\\

\begin{figure}[!ht] 
   \centering
   \includegraphics[scale=.5]{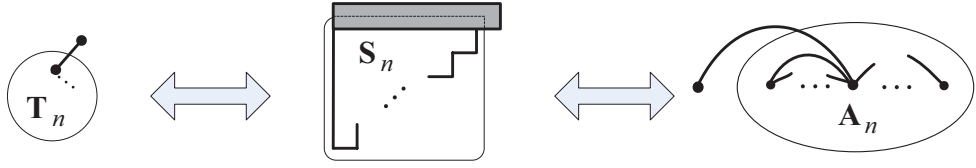}
   \caption{Construction of ${{\bf{X}}_{n + 1}}$ using ${f_L}.$}
   \label{leftchild}
\end{figure}

\item For the last case we define functions $f:\left( ({{{\mathbf{X}}_{{n_1}}}-f_M({\mathbf{X}}_{n_1-1}))\times{{\bf{X}}_{{n_2}}}} \right)
\to {{\bf{X}}_{n + 1}},\,\,\,\,\,\,{\bf{X}} \in \left\{
{{\bf{T}},{\bf{S}},{\bf{A}}} \right\},\,\,{n_1} > 1 \in
\mathbb{N},{n_2} \in\mathbb{N} ,\,\,n = ({n_1} + {n_2}) \ge 3.$
 Depending on which combinatorial object is the input to this function, we perform the following
 procedures:\\

a) $ {\bf{X}} = {\bf{T}} $: In this case, the output in
${{\bf{T}}_{{n_1} + {n_2} + 1}}$ is a tree with ${n_1} + {n_2} + 1$
vertices constructed by adding one vertex as the root whose left and
right children are the roots of trees from
${{\bf{T}}_{{n_1}}}-f_M({\bf{T}}_{n_1-1})$ and ${{\bf{T}}_{{n_2}}}$
 respectively.\\

b) $ {\bf{X}} = {\bf{S}} $: In this case, the output in $ {{\bf{S}}_{{n_1} +
{n_2} + 1}}$ is a shape with ${n_1} + {n_2} + 1$ rectangles constructed by
introducing a rectangle of size $\left( {({n_1} + 1) \times ({n_2} + 1)}
\right)$ in the top left corner of the shape and adding staircase tilings
$t_1\in{{\bf{S}}_{{n_1}}}-f_M({\bf{S}}_{n_1-1})$ and $t_2\in{{\bf{S}}_{{n_2}}}$
to the bottom and right of the rectangle
respectively. The tiling added to the bottom should not have a single square as its bottom-most tile.\\

c)$ {\bf{X}} = {\bf{A}} $: In this case, the output in $
{{\bf{A}}_{{n_1} + {n_2} + 1}}$ is a shape with points constructed
by concatenating arc trees from
${{\bf{A}}_{{n_1}}}-f_M({\bf{A}}_{n_1-1})$ and ${{\bf{A}}_{{n_2}}}$
in that order, left to right, by identifying their respective
rightmost and leftmost points. Then we add a point to the left side
of both
and connect it to the identified common point. The input arc tree on the left should not have an arc between its first and last points.\\

 Figure~\ref{forkchild} represents the operation of $f\left( {.,.}
\right)$ to construct a shape of size ${n_1} + {n_2} + 1$. The first input
argument can be any shape with size of ${n_1} > 1$ except the shapes
constructed by ${f_M}\left( {{{\bf{X}}_{{n_1} - 1}}} \right)$. However the
second argument can be any shape of size ${n_2}$ . The number of shapes of size
$ n+1 $ which can be constructed with this function is denoted ${c_{n + 1,4}} =
\sum\limits_{k = 2}^{n - 1} {{c_{n - k}}\left( {{c_k} - {c_{k - 1}}} \right)}
$.

\begin{figure}[!ht] 
   \centering
   \includegraphics[width=\textwidth]{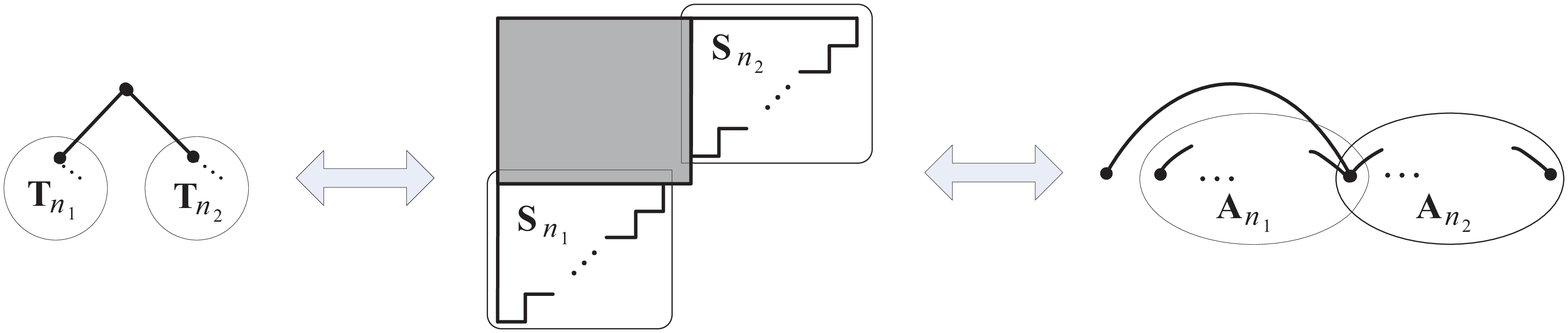}
   \caption{Construction of ${{\bf{X}}_{{n_1} + {n_2} + 1}}$ using $f\left( {{{\bf{X}}_{{n_1}}},{{\bf{X}}_{{n_2}}}} \right)$}
   \label{forkchild}
\end{figure}

\end{enumerate}

\subsection{Bijections implied by the construction.}

The functions we have defined allow the shapes to be built
recursively, and to be deconstructed as well. Unique construction
and deconstruction allow us to realize a bijection between any two
sets whose shapes are built with the four functions defined above.
First we note that there is only one element of $\mathbf{X}_1$ for
each of the shapes we consider. Figure~\ref{base} shows the three
sets of size one.

\begin{figure}
   \centering
   \includegraphics[scale=.5]{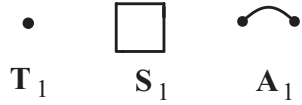}
   \caption{Trivial bijections between $\mathbf{T}_1$, $\mathbf{S}_1$ and $\mathbf{A}_1$. Recall that these are each defined as an image of $f_M.$}
   \label{base}
\end{figure}

\begin{defi}
For $n\ge 1$ we define maps $\alpha:\mathbf{X}_n\to\mathbf{X'}_n$ for
$\mathbf{X,X'}\in\{\mathbf{T,S,A}\}$ as follows:

 For $x\in X_n$ we consider $x$
to be the shape that results from applying exactly $n$ functions $f_i, i=1\dots
n$  in a particular order to $k$ initial copies of $x_0\in X_0$, the single
element of size zero for $k\ge 1$. Here $f_i \in \{f, f_L, f_R, f_M\}.$ We
denote as $F_x$ the function that is the composition of cartesian products of
the $n$ functions $f_i,$ whose domain is $k$ copies of $X_0,$ and whose sole
image is $x$.

Then  $\alpha(x) = F_x(x'_0,x'_0,\dots,x'_0)$ for $k$ copies of  $x'_0\in
\mathbf{X'}_0$, the element of size zero.
\end{defi}

For examples see Figures~\ref{sizetwo}, \ref{reduce} and~\ref{rebuild}.

\begin{theorem}
$\alpha:\mathbf{X}_n\to\mathbf{X'}_n$ as just defined gives bijections for all
$\mathbf{X,X'}\in\{\mathbf{T,S,A}\}.$
\end{theorem}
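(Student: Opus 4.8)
The plan is to reduce the theorem to a single property of each family, \emph{unique deconstruction}, and then exploit the fact that the resulting combinatorics is the same across families. The definition of $\alpha$ already presupposes that every $x\in\mathbf{X}_n$ is the image of a unique composite $F_x$ of the four functions applied to copies of $x_0$. Granting this, the admissible composites are governed by conditions that refer only to which function symbol sits at each node of $F_x$ (the argument of $f_L$, and the first argument of $f$, must not be $f_M$-constructions), and these conditions never mention $\mathbf{X}$. Hence deconstruction identifies each $\mathbf{X}_n$ with one fixed set of formal construction trees, and $\alpha$ is the composite of this identification for $\mathbf{X}$ with its inverse for $\mathbf{X'}$. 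Equivalently, and more simply, the reverse map $\mathbf{X'}_n\to\mathbf{X}_n$ built by the same recipe is a two-sided inverse of $\alpha$: applying $F_x$ in $\mathbf{X'}$ and then deconstructing returns $F_x$, by uniqueness of deconstruction in $\mathbf{X'}$. So everything rests on establishing unique deconstruction.

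Concretely, I would prove the following for each $\mathbf{X}\in\{\mathbf{T},\mathbf{S},\mathbf{A}\}$ and each $n\ge 1$: the maps $f_R,f_M$ on $\mathbf{X}_n$, $f_L$ on $\mathbf{X}_n-f_M(\mathbf{X}_{n-1})$, and $f$ on the stated product, are injective, have pairwise disjoint images, and together cover $\mathbf{X}_{n+1}$; the base cases are that $\mathbf{X}_0$ and $\mathbf{X}_1$ are singletons with $\mathbf{X}_1=f_M(\mathbf{X}_0)$. Injectivity is immediate in each case, since the construction is reversed by deleting the newly added root vertex, leftmost column/corner rectangle, or leftmost point. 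For surjectivity I would avoid a direct exhaustiveness argument by counting: using the known fact that $|\mathbf{X}_m|=c_m$ together with injectivity of $f_M$, the four domains have sizes $c_n$, $c_n$, $c_n-c_{n-1}$, and $\sum_{k=2}^{n-1}c_{n-k}(c_k-c_{k-1})$, and the identity $c_{n+1}=2c_n+(c_n-c_{n-1})+\sum_{k=2}^{n-1}c_{n-k}(c_k-c_{k-1})$ reduces to Segner's recursion~\eqref{Segner}. So once injectivity and disjointness are in hand, the images must exhaust $\mathbf{X}_{n+1}$ by cardinality.

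It then remains to see that the images are pairwise disjoint, which I would do family by family using a local feature together with the $f_M$-obstruction. For right-swept trees the feature is the configuration of the root's children: a single right child, a single middle child, a single left child, or a left--right pair, and these four exclusive possibilities are exactly the images of $f_R,f_M,f_L,f$; the right-swept rule that a left child is never a leaf and never carries a middle child is precisely the condition that the corresponding subtree avoid $f_M(\mathbf{X}_{n-1})$, matching the stated domains. For arc trees the feature is the target $p_j$ of the unique arc leaving the leftmost point $p_0$: if $p_j$ is the nearest point we lie in $\mathrm{im}(f_R)$, if it is the rightmost we lie in $\mathrm{im}(f_M)$, and otherwise the left part spanned by $p_1,\dots,p_j$ either is or is not an $f_M$-image, i.e.\ does or does not contain an arc joining $p_1$ to $p_j$, these two alternatives separating $\mathrm{im}(f_L)$ from $\mathrm{im}(f)$; the degenerate configurations flagged in the definitions---an arc between the first and last points of an input---are exactly the $f_M$-images. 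For staircase tilings the analogous features live at the top-left corner rectangle and at the lowest tile: an $f_M$-image is marked by having a unit square as its lowest tile, and among the remaining tilings the shape of the top-left rectangle---a full-height leftmost column, a full-width top row, or a genuinely two-dimensional block---separates the images of $f_R$, $f_L$, and $f$.

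I expect the staircase and arc-tree cases to be the main obstacle: unlike the one-line root inspection available for trees, here disjointness and the matching of the flagged degeneracies to $f_M$-images require genuine geometric case analysis. In particular one must check that the non-crossing constraint in the arc-tree recipes for $f_L$ and $f$ makes the target $p_j$ well defined and makes the two recipes neither overlap nor omit any configuration, and that in the tiling recipes the sub-tilings to which $f_L$ and $f$ refer are precisely those outside $f_M(\mathbf{X}_{n-1})$. Once the lemma is established in all three families, the reverse-map argument of the first paragraph completes the proof that $\alpha$ is a bijection.
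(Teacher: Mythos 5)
Your proposal is correct, and its skeleton coincides with the paper's: both reduce the theorem to unique construction/deconstruction by the four functions, anchored at the base case $\mathbf{X}_1=f_M(\mathbf{X}_0)$ and proved by strong induction, and both use the same local distinguishing features (the root's child configuration for $\mathbf{T}$; the unit-square bottom tile versus the shape of the top-left rectangle for $\mathbf{S}$; the target of the leftmost point's arc for $\mathbf{A}$). Your explicit observation that the domain restrictions on $f_L$ and on the first argument of $f$ are conditions on the formal composite alone---so that a composite admissible in $\mathbf{X}$ is automatically admissible in $\mathbf{X'}$---is a point the paper leaves implicit, and it is worth making. The one genuine divergence is how exhaustiveness is established. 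The paper argues directly that every shape of size $n$ falls under exactly one of the four cases and decomposes into smaller shapes, so its case analysis delivers covering, disjointness, and uniqueness simultaneously, and never needs to know $|\mathbf{X}_n|$ in advance. You instead prove only injectivity and disjointness structurally and then force covering by cardinality, invoking the externally known enumeration $|\mathbf{X}_m|=c_m$ together with the identity $c_{n+1}=2c_n+(c_n-c_{n-1})+\sum_{k=2}^{n-1}c_{n-k}(c_k-c_{k-1})$, which does reduce to Segner's recursion \eqref{Segner}; the paper records exactly this computation, but deliberately places it outside the proof as an instructive aside. Your version buys a lighter geometric burden---one need not verify that the listed cases exhaust all tilings and arc diagrams, only that they are mutually exclusive---at the cost of self-containedness: it leans on the fact that all three families are Catalan-enumerated, which is proved elsewhere (Stanley's addendum), and consequently it could not be used to transport an enumeration to a family whose count was not already known, something the paper's direct argument accomplishes for free.
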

\begin{proof}
We show that $\alpha$ is well defined, surjective and invertible by
demonstrating that for any shape $x'\in \mathbf{X'}_n$ there is a unique
composition of cartesian products of functions from $f_L, f_M, f_R$ and $f$
that constructs it. Since a given composition constructs only one shape in each
of $\mathbf{T,S,A},$ having that composition means having knowledge of a unique
shape $x\in\mathbf{X}_n$ corresponding to $x'.$ The existence of a unique
composition is argued using strong induction, since the function $f$ takes
inputs from sets with smaller indices than just $n-1.$ We note that the single
shapes for $ n=1 $ (in Figure~\ref{base}) are all constructed uniquely by $f_M$
by definition. Assuming that shapes smaller than size $n$ are uniquely
constructed, we then check for size $n$ as follows:
\begin{enumerate}
\item[$\mathbf{T}:$] For any right-swept tree  $t\in {{\bf{T}}_{n}}$, depending
on whether the root has left, middle, right or both left and right children,
the tree is uniquely constructed from one or two smaller trees. For the
right-swept trees this follows from their definition.
\item[$\mathbf{S}:$] For any staircase tiling $s\in{{\bf{S}}_{n}}$ the shape is uniquely
constructed from one or two smaller shapes. The construction is determined
first by whether $s$ has a single square as its bottom-most tile. If that is
the case, then $s$ is constructed from a single smaller tiling by $f_M.$
Otherwise, we can determine whether it was constructed by $f_L, f_R$ or $f$
respectively by whether $s$ has a a single long rectangle along its top, along
its left side, or neither (instead it has a thick rectangle that covers some of
both but neither the entire top nor the entire left edges.)
\item[$\mathbf{A}:$] For any arc tree $a\in{{\bf{A}}_{n}}$ the shape is constructed from one or two smaller shapes.
The construction is determined first by whether $a$ has a single arc connecting
its first and last points. If that is the case, then $a$ is constructed from a
single smaller arc tree by $f_M.$ Otherwise, we can determine whether it was
constructed by $f_R, f_L$ or $f$ respectively by whether $a$ has a a single
short arc connecting its first (leftmost) and second points, a single arc
connecting its left-most point with the second available point, or neither
(instead it has a single longer arc connecting its left-most point to another,
more central, point.)
\end{enumerate}
\end{proof}

It is instructive to show that the total number of shapes constructed by our
four functions is equal to $ {c_{n + 1}} $. That is, that the Catalan number
${c_{n + 1}} = \sum\limits_{i = 1}^4 {{c_{n + 1,i}}} $. Equivalently we need to
prove that $\sum\limits_{k = 1}^{n - 1} {{c_k}{c_{n - k}}}  = {c_{n + 1}} -
2{c_n}$. To prove this we use Segner's recurrence relation for Catalan numbers.
\begin{equation}
{c_0} = 1\,\,\,and\,\,{c_{n + 1}} = \sum\limits_{k = 0}^n {{c_k}{c_{n - k}}} \,\,\,\,\,for\,\,\,n > 0
\end{equation}
So we have
\begin{equation}
{c_{n + 1}} = \sum\limits_{k = 0}^n {{c_k}{c_{n - k}}}  = {c_0}{c_n} + \sum\limits_{k = 1}^{n - 1} {{c_k}{c_{n - k}}}  + {c_n}{c_0} \Rightarrow \sum\limits_{k = 1}^{n - 1} {{c_k}{c_{n - k}}}  = {c_{n + 1}} - 2{c_n}
\end{equation}

\subsection{Examples}
Example 1: We want to demonstrate the bijections between right-swept trees,
staircase tilings  and arc trees for $ n=3 $.  For this we use the proposed
method twice. So for $ n=2 $ the bijection between these combinatorial objects
is illustrated in Figure~\ref{sizetwo}:

\begin{figure}[!ht]
   \centering
   \includegraphics[scale=.4]{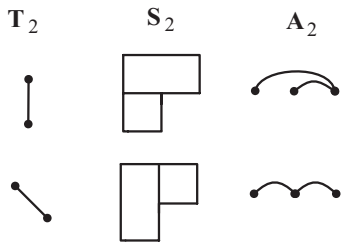}
   \caption{Bijections between right-swept
trees, staircase tilings  and arc trees for $n=2$. The top row is formed by
$f_M \circ f_M $ and the bottom row by $f_R\circ f_M.$}
   \label{sizetwo}
\end{figure}

Now the bijections between right-swept trees, staircase tilings and arc trees
for $ n=3 $ can be illustrated, in Figure~\ref{table_alpha}:

Example 2: Here we take a shape in $\mathbf{S}_{12},$ seen in
Figure~\ref{Fig9}. We want to find its images under $\alpha$ in
$\mathbf{T}_{12}$ and $\mathbf{A}_{12}$.
\begin{figure}[!ht]
   \centering
   \includegraphics[scale=.4]{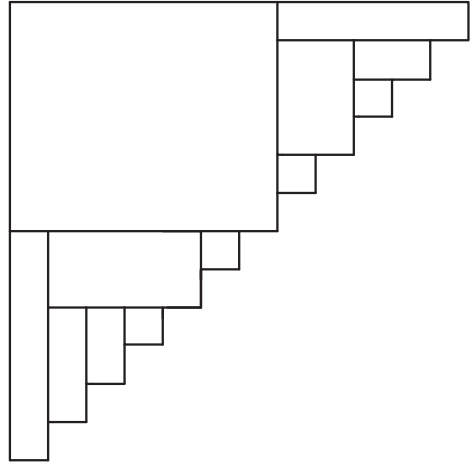}
   \caption{An example of a staircase tiling with $n=12.$}
   \label{Fig9}
\end{figure}
 First we apply the inverses
of our functions introduced before in order to uniquely reduce the size of the
shape to $n = 1$, shown in Figure~\ref{reduce}.
\begin{figure}[!ht]
   \centering
   \includegraphics[width=\textwidth]{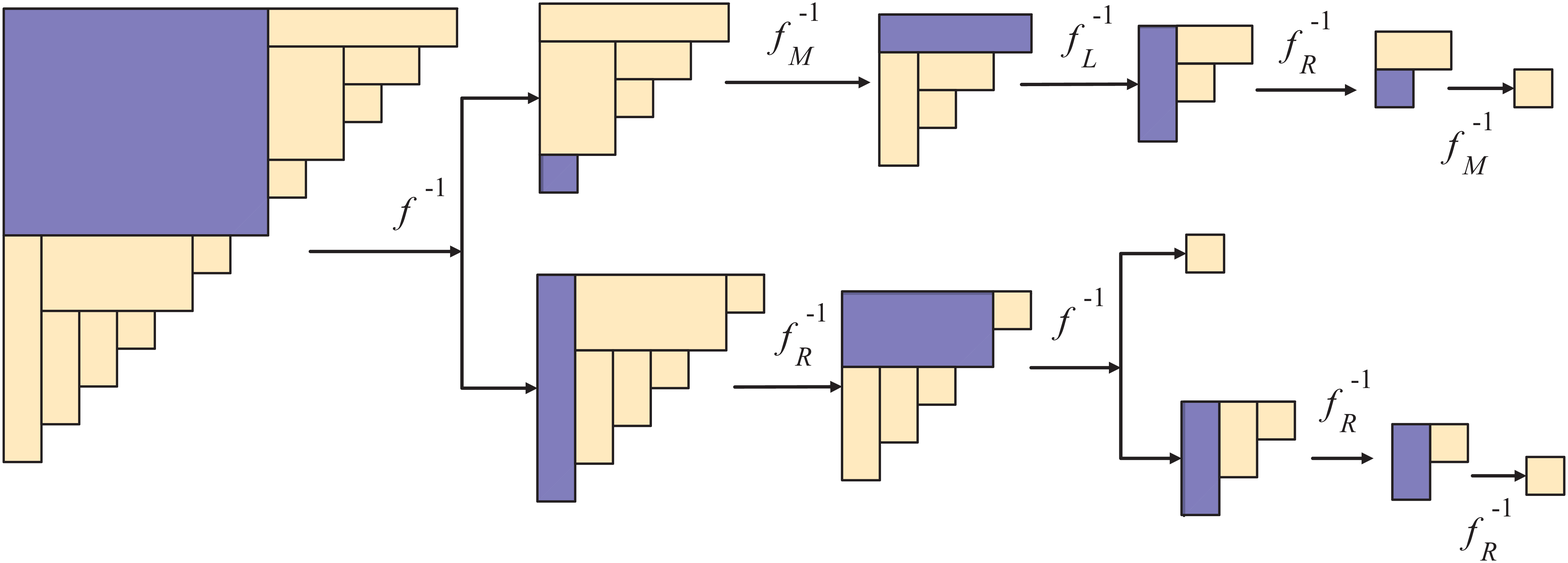}
   \caption{The inverse process on shape of Figure~\ref{Fig9}. Darkly shaded tiles are discarded by the inverse functions.
   Thus the tiling $x$ shown here is formed by:
    $${\hspace{-.45in}F_x(x_0,x_0,x_0) =f(f_M \circ f_L \circ f_R \circ f_M \circ f_M(x_0), f_R \circ f(f_M(x_0), f_R\circ f_R \circ f_M(x_0))).}$$}
   \label{reduce}
\end{figure}
Now by applying the functions we found in Figure~\ref{reduce}, we can construct
bijective images of the staircase tiling in the sets of right-swept trees and
arc trees, which are shown in Figure~\ref{rebuild}.
\begin{figure}[!ht]
   \centering
   \includegraphics[width=\textwidth]{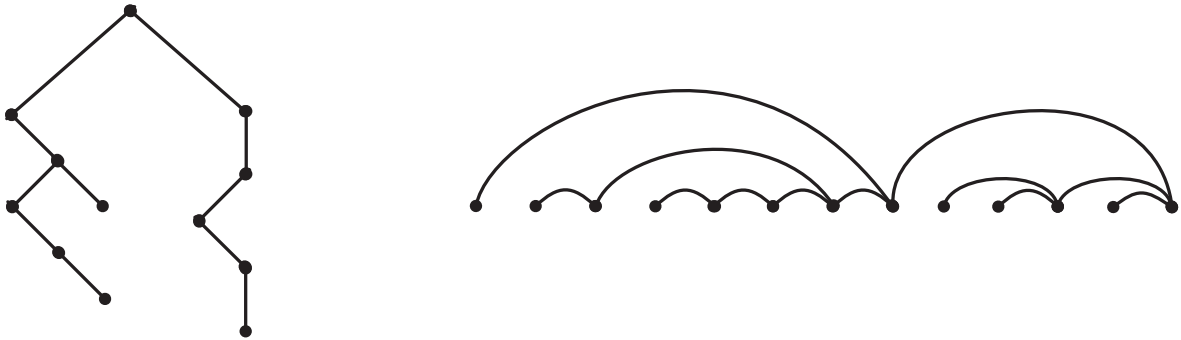}
   \caption{Images (under $\alpha$) of the shape of Figure~\ref{Fig9} in $\mathbf{T}_{12}$ and $\mathbf{A}_{12}$ }
   \label{rebuild}
\end{figure}
Finally we present the induced bijective correspondence for a binary tree with
12 internal nodes and a planar tree with 12 edges. This is seen in
Figure~\ref{twotrees}.
\begin{figure}[!ht]
   \centering
   \includegraphics[scale=.4]{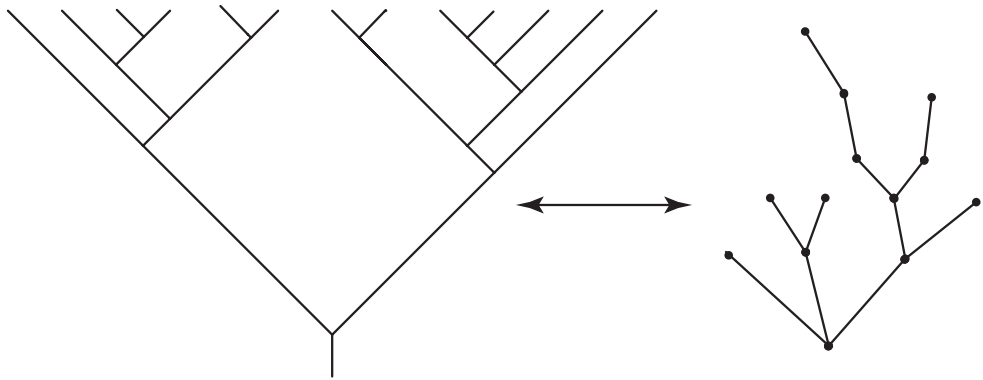}
   \caption{Corresponding binary tree and planar tree, under bijection induced by $\alpha$ between arc trees and
   staircase tilings. This example uses the staircase tiling from Figure~\ref{Fig9} and Figure~\ref{reduce}.}
   \label{twotrees}
\end{figure}

\begin{figure}[!ht]
   \centering
   \includegraphics[scale=.4]{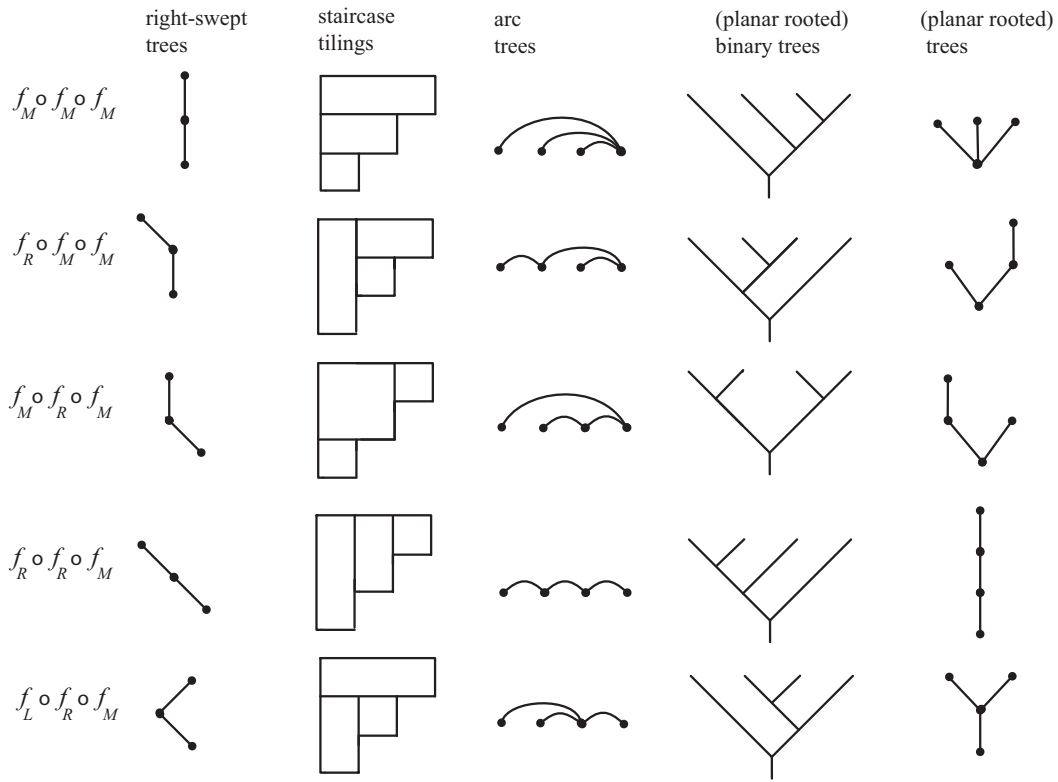}
   \caption{Bijections for $n=3.$ Each row represents a class of objects mapped to each other, the first three columns by the
     bijection $\alpha$ and the last two columns via canonical bijections from the staircase tilings and arc trees.}
   \label{table_alpha}
\end{figure}
\newpage

\section{Relative bijection from ${\mathbf{T}}_n$ to ${\mathbf{S}}_n$}\label{sec:rel}
For contrast, we consider a different method for constructing a
bijection from right-swept trees to staircase tilings.

We start by describing a second new mapping $\beta:{\mathbf{T}}_n
\to {\mathbf{S}}_n.$  Rather than using recursion, this time we
declare several rules about the relative positions of rectangles on
one hand and tree nodes on the other.  To characterize this mapping,
we need several rules which describe how two labeled nodes attached
by an edge of the right-swept tree are translated to two labeled
rectangles in the staircase tiling.

\begin{enumerate}
\item
Let two nodes be attached by a tree edge with a positive slope, so
that node $a$ is a left child of node $b.$  Then rectangle $b$ will
be immediately to the right of rectangle $a.$
 See Figure \ref{Ex1}.

\begin{figure}[!ht] 
   \centering
   \includegraphics[scale=.35]{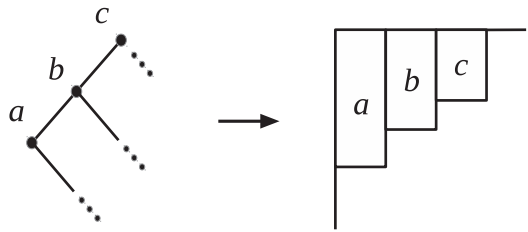}
   \caption{Example of Rule 1 ($\beta:{\mathbf{T}}_n\to{\mathbf{S}}_n$).}
   \label{Ex1}
\end{figure}

\item For two nodes attached by a negative sloped edge the situation is more complex.
If a right child $b$ is the only child of a root, middle child, or right child
$a$, and $b$ itself is a leaf or has only a middle or right child,
then the rectangle $b$ will be immediately right of the rectangle
corresponding to $a$. See Figure \ref{Ex2}.

\begin{figure}[!ht] 
   \centering
   \includegraphics[scale=.35]{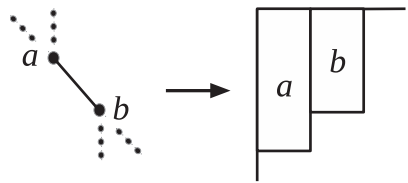}
   \caption{Example of Rule 2 ($\beta:{\mathbf{T}}_n\to{\mathbf{S}}_n$).}
   \label{Ex2}
\end{figure}

\item However, if a right child is produced from a left child (or as part of a left and right child),
 the corresponding rectangle will be immediately below the rectangle corresponding to the spawning vertex.  See Figure \ref{Ex3}.

\begin{figure}[!ht] 
   \centering
   \includegraphics[scale=.35]{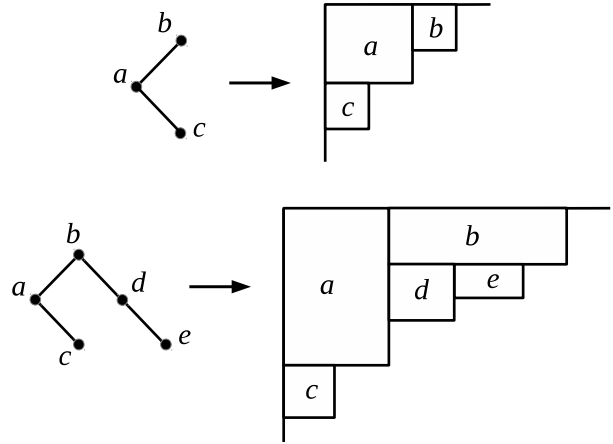}
   \caption{Examples of Rule 3 ($\beta:{\mathbf{T}}_n\to{\mathbf{S}}_n$).}
   \label{Ex3}
\end{figure}

\item A middle child $b$ will always correspond to a rectangle directly below the rectangle corresponding to the spawning vertex $a$.  See Figure \ref{Ex4}.

\begin{figure}[!ht] 
   \centering
   \includegraphics[scale=.35]{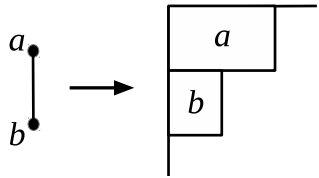}
   \caption{Example of Rule 4 ($\beta:{\mathbf{T}}_n\to{\mathbf{S}}_n$).}
   \label{Ex4}
\end{figure}

\item There is only one case in which adjacent nodes do not correspond to adjacent rectangles: if  $b$ is a right child of $a$,
and $b$ has a left child $d$. Then rectangle $b$ is right of
rectangle $a$, but Rule 1 is used to place the left child of $b$
(and its left child, etc.) between rectangles $a$ and $b$. See
Figure \ref{Ex5}.

\begin{figure}[!ht] 
   \centering
   \includegraphics[scale=.35]{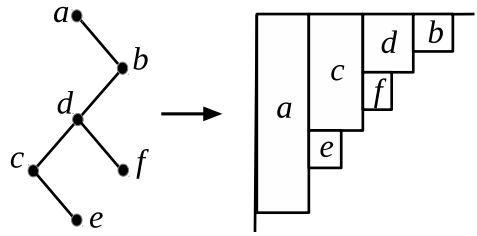}
   \caption{Example of Rule 5 ($\beta:{\mathbf{T}}_n\to{\mathbf{S}}_n$).}
   \label{Ex5}
\end{figure}

\end{enumerate}

This method gives a specific set of instructions at each vertex
point for how to proceed with no ambiguity in the decision-making
process.  Therefore each tree in ${\mathbf{T}}_n$ will give a unique
structure in ${\mathbf{S}}_n$.

Figure \ref{Example} is a final example for the case that $n = 10$
for the mapping from ${\mathbf{T}}_n$ to ${\mathbf{S}}_n$.

\begin{figure}[!ht] 
   \centering
   \includegraphics[scale=.35]{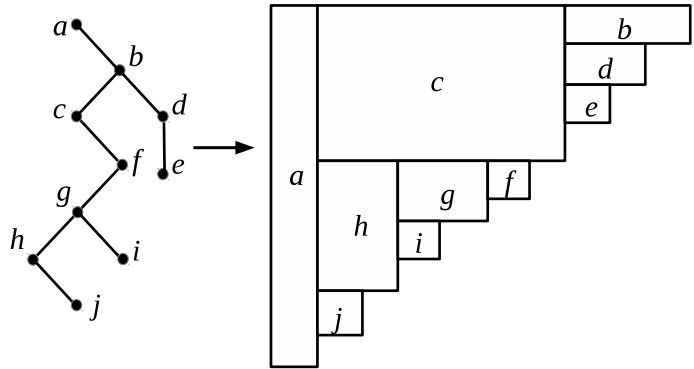}
   \caption{A full example for the case $n = 10$ ($\beta:{\mathbf{T}}_n\to{\mathbf{S}}_n$).}
   \label{Example}
\end{figure}

\begin{theorem}
The mapping $\beta:{\mathbf{T}}_n\to{\mathbf{S}}_n$ determined by the above
rules is a bijection.
\end{theorem}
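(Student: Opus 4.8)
The plan is to show that $\beta$ is a bijection by establishing that it is a well-defined injection between two finite sets of equal cardinality $c_n$, from which surjectivity (and hence bijectivity) follows automatically. Since the text preceding the statement already argues that the five rules give an unambiguous deterministic procedure, so that each tree produces a \emph{unique} output structure, the two remaining things I would need to nail down are: (i) that the output is genuinely a valid staircase tiling in $\mathbf{S}_n$ (the rules really do tile a staircase of the right height with $n$ rectangles that each meet the diagonal), and (ii) that $\beta$ is injective, i.e. distinct trees never produce the same tiling.

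First I would verify well-definedness by induction on $n$. The base cases $n=1,2$ are checked directly against Figures~\ref{www} and~\ref{e^8}. For the inductive step I would traverse the right-swept tree from the root and track, at each node, which ``step'' of the staircase its rectangle abuts. The key structural invariant to maintain is that the rectangles placed so far always form a legal partial staircase tiling and that each newly placed rectangle (by whichever of Rules~1--5 applies) either extends a row to the right, drops down to begin a new row, or---in the single exceptional Rule~5---reserves a gap that is subsequently filled by the left-descendant rectangles. The fact that a left child can never be a leaf and never has a middle child (the defining ``right-swept'' restriction) is exactly what guarantees that every gap opened by Rule~5 is eventually closed, so that no rectangle is left dangling and the final shape is a genuine staircase with no holes. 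Counting that each of the $n$ nodes contributes exactly one rectangle meeting the diagonal gives membership in $\mathbf{S}_n$.

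For injectivity I would argue that $\beta$ can be inverted locally: given a staircase tiling, I would reconstruct the tree by reading off, for each rectangle, its adjacency relations (which neighbor lies immediately to its right, and which lies immediately below) and then invert Rules~1--5 to recover the parent/child type (left, middle, right, or the Rule~5 configuration) of the corresponding node. The crucial point is that Rules~1--5 partition \emph{all} possible local node configurations in a right-swept tree, and the geometric outcomes they produce (right-adjacency versus below-adjacency, together with the signature gap pattern of Rule~5) are mutually distinguishable in the tiling. Making this precise is where I expect the real work to lie: I must confirm that no two \emph{different} local tree configurations can map to the same local tiling configuration, in particular that the Rule~2 right-adjacency and the Rule~5 right-adjacency are separated by the presence or absence of the intervening left-child rectangles, and that Rule~3's below-placement is distinguishable from Rule~4's below-placement.

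The main obstacle, then, is Rule~5 and its interaction with Rules~1--3: it is the only rule that breaks ``adjacent nodes $\mapsto$ adjacent rectangles,'' so the decoding procedure must correctly attribute the rectangles sitting between $a$ and $b$ to the left-subtree of $b$ rather than misreading them as separate right-children of $a$. I would handle this by showing that the rectangles inserted by Rule~1 between $a$ and $b$ form a contiguous horizontal run terminating precisely at rectangle $b$, and that this run is recognizable from the tiling geometry (it occupies a single row-segment bounded below by the step pattern). Once this one ambiguity is resolved, the local inversion is forced at every rectangle, so the reconstructed tree is unique; hence $\beta$ is injective. Combining injectivity with $|\mathbf{T}_n| = |\mathbf{S}_n| = c_n$ yields that $\beta$ is a bijection, completing the proof.
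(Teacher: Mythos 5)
Your proposal is sound, and it takes a structurally different route from the paper. The paper proves the theorem by constructing an explicit candidate inverse $\beta^{-1}:{\mathbf{S}}_n\to{\mathbf{T}}_n$ via its own set of three rules, arguing that those rules are likewise deterministic, and then grouping the forward and reverse rules into three ``tiers'' (Rules 2 and 4 forward with reverse Rule 1; Rules 1 and 5 forward with reverse Rule 2; Rule 3 forward with reverse Rule 3) that are checked to undo one another, with the Tier Three case restarting the process recursively on smaller subtilings and subtrees; this yields injectivity and surjectivity simultaneously and is self-contained. You instead prove only injectivity (via local decoding of the tiling) and then invoke $|{\mathbf{T}}_n| = |{\mathbf{S}}_n| = c_n$ to get surjectivity for free; this counting fact is legitimately available, both from Stanley's addendum and from the paper's own Section~\ref{sec:rec} bijection $\alpha$, so the shortcut is valid. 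What your route buys is economy: you only need your decoding to succeed on tilings that actually arise as images $\beta(t)$, whereas the paper must implicitly claim every staircase tiling is decodable. What the paper's route buys is an explicit algorithm for $\beta^{-1}$ (used in its examples) and independence from the counting result. Two further points of comparison: first, your step (i), checking that $\beta(t)$ is a genuine staircase tiling with $n$ diagonal-meeting rectangles, is a well-definedness issue the paper's proof never addresses explicitly, so your outline is more careful there; second, the hard combinatorial core is identical in both arguments, namely disambiguating the below-placements of Rules 3 and 4 and the right-placements of Rules 1, 2, and 5 (especially the non-adjacent Rule 5 configuration), and your ``local inversion,'' once fleshed out, would essentially reproduce the paper's reverse rules, so neither write-up can claim to have discharged that core in full detail.
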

\begin{proof}
We consider the reverse mapping $\beta^{-1}:{\mathbf{S}}_n\to{\mathbf{T}}_n$.
In a similar fashion, we develop a series of rules for this mapping.

\begin{enumerate}

\item If the top-left rectangle goes to the bottom of the figure (i.e. width = $1$ unit),
 the root spawns a right child.  If the top-left rectangle goes to the farthest right edge (i.e. depth = $1$ unit),
the root spawns a middle child.  This process is repeated as necessary.  See Figure \ref{Exmpl1}.

\begin{figure}[!ht] 
   \centering
   \includegraphics[scale=.35]{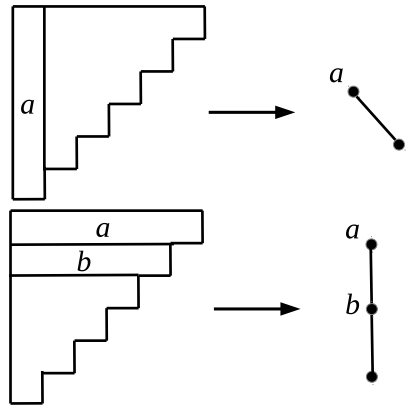}
   \caption{Examples of Rule 1 ($\beta^{-1}:{\mathbf{S}}_n\to{\mathbf{T}}_n$).}
   \label{Exmpl1}
\end{figure}

\item If the top-left rectangle has width greater than 1 unit and
depth greater than 1 unit, then a limb of left children is formed
where
 the bottom vertex on this limb corresponds to the left-most rectangle.
 The length of this limb of left children is determined by the number of
 rectangles read from left to right, going as far right as possible.  See Figure \ref{Exmpl2}.

\begin{figure}[!ht] 
   \centering
   \includegraphics[scale=.35]{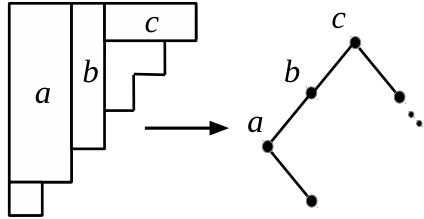}
   \caption{Example of Rule 2 ($\beta^{-1}:{\mathbf{S}}_n\to{\mathbf{T}}_n$).}
   \label{Exmpl2}
\end{figure}

\item Any remaining rectangles are treated as right children of the vertex corresponding to the
rectangle directly above and the process repeats with these remaining rectangles acting as miniature
 versions of ${\mathbf{S}}_n$.  Notice this rule satisfies the restriction in ${\mathbf{T}}_n$ to have
  a right child or a right and left child following a left child.  See Figure \ref{Exmpl3}.

\begin{figure}[!ht] 
   \centering
   \includegraphics[scale=.35]{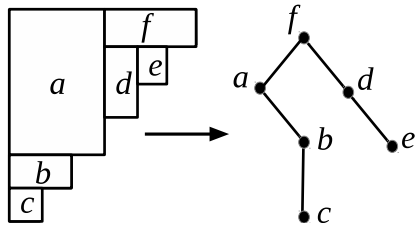}
   \caption{Example of Rule 3 $(\beta^{-1}:{\mathbf{S}}_n\to{\mathbf{T}}_n$).}
   \label{Exmpl3}
\end{figure}

\end{enumerate}

As in the previous case, we now illustrate with a final example, the
reverse image for the case $n=10$ that we considered earlier.  See
Figure \ref{Example1}.  We use the set of rules developed here, and
see that we arrive at the same pre-image in ${\mathbf{T}}_n$.

\begin{figure}[!ht] 
   \centering
   \includegraphics[scale=.35]{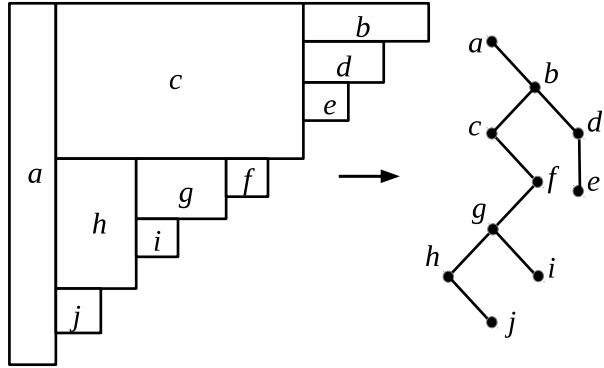}
   \caption{A full example for the case $n = 10$ ($\beta^{-1}:{\mathbf{S}}_n\to{\mathbf{T}}_n$).}
   \label{Example1}
\end{figure}

Once again, no ambiguity arises from the rules developed above, and so each
output of this algorithm is unique for each unique input.

We now argue that these sets of rules form an inverse function.  We define Tier
One rules to be Rules 2 and 4 from the former direction and Rule 1 from the
latter direction.  We define Tier Two rules to be Rules 1 and 5 in the former
direction and Rule 2 in the latter direction.  We define Tier Three rules to be
Rule 3 from the former direction and Rule 3 from the latter direction.

Tier One rules are easily seen to be inverse rules, and when using
any Tier One rule from the outset, the resulting figure in the next
step is a new tree or staircase shape with $n-1$ vertices or
rectangles for ${\mathbf{T}}_n$ and ${\mathbf{S}}_n$, respectively.

The real key to this process occurs in Tier Two and Tier Three
rules.  In ${\mathbf{T}}_n$, the Tier Two rules occur any time a
left child is introduced, whether it is from the root (Rule 1) or
somewhere else in the tree (Rule 5).  In ${\mathbf{S}}_n$, the Tier
Two rules occur any time a rectangle having width and depth both
greater than 1 unit is introduced, either as the top-left rectangle
(corresponding to the root in ${\mathbf{T}}_n$) or somewhere else in
the staircase (corresponding to a different branch in
${\mathbf{T}}_n$).  These two phases are clearly inverses of each
other, since Rules 1 and 5 of ${\mathbf{T}}_n$ imply Rule 2 of
${\mathbf{S}}_n$ and vice versa, and in corresponding sections of
the tree and staircase.

Tier Three rules in ${\mathbf{T}}_n$ occur whenever a right child
branches off of a left limb (where the length of the limb is
anywhere from $1$ to $n-1$ branches). Similarly, Tier Three rules in
${\mathbf{S}}_n$ occur any time there are leftover rectangles
underneath of a Tier Two structure in ${\mathbf{S}}_n$.  In both
${\mathbf{T}}_n$ and ${\mathbf{S}}_n$, the process renews itself
when Tier Three rules are utilized, leaving smaller tree and
staircase structures of corresponding size, both starting
independently with the same set of rules the larger structure obeys.
Therefore, Tier Three rules in ${\mathbf{T}}_n$ imply Tier Three
rules in ${\mathbf{S}}_n$ and vice verse, and in corresponding
sections of the tree and staircase.

Breaking down our algorithm into three tiers of rules has allowed us
to show that this function is indeed an inverse function.  We
therefore have successfully described the bijection between
${\mathbf{S}}_n$ and ${\mathbf{T}}_n$.
\end{proof}

\subsection{Examples contrasting the bijections.}
Interestingly, the two bijections $\alpha:{\mathbf{T}}_{n} \to
{\mathbf{S}}_{n}$ and $\beta:{\mathbf{T}}_{n}\to {\mathbf{S}}_{n}$ set up
precisely the same correspondence between right-swept trees and staircase
tilings for $n=0,1, 2.$  An obvious question is raised: are the two bijections
we have described the same? The answer is no. We see this at $n=3,$ by
comparing the tables in Figures~\ref{table_beta} and~\ref{table_alpha}.

\begin{figure}[!ht]
   \centering
   \includegraphics[scale=.4]{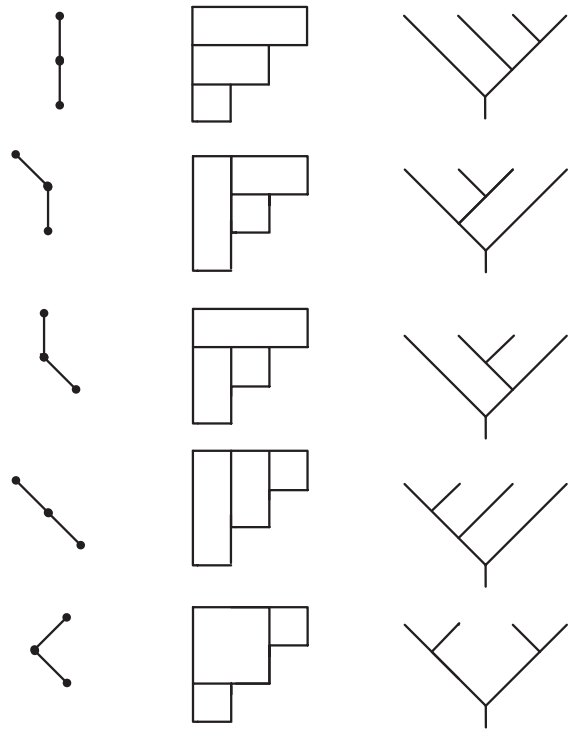}
   \caption{Bijections for $n=3$ for the
     bijection $\beta.$ Note that the third and fifth images are switched from those of $\alpha$ in Figure~\ref{table_alpha}. }
   \label{table_beta}
\end{figure}

The slightly larger example we include next in Figure~\ref{fiver} was suggested
by the referee. In fact we'd like to take this opportunity to thank the
referee, who went to a great deal of trouble on our behalf. This example is in
$n=5.$

\begin{figure}[!ht]
   \centering
   \includegraphics[scale=.3]{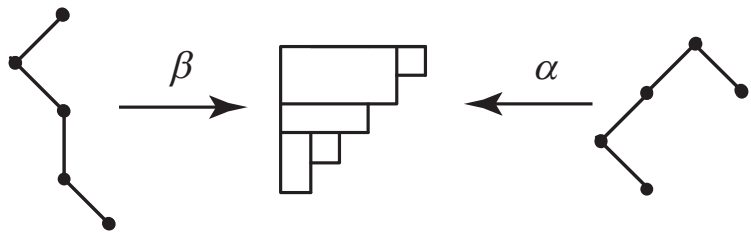}
   \caption{Contrasted pre-images of an ${\mathbf{S}}_5$ tiling. }
   \label{fiver}
\end{figure}

 Finally we include here
in Figure~\ref{f:compare} a larger example to highlight the differences. A tree
from ${\mathbf{T}}_{12}$ (the same example as in Figure~\ref{rebuild}) is shown
in the center, and then its two images in ${\mathbf{S}}_{12}$: on the left is
the image of the recursive bijection from Section~\ref{sec:rec} and on the
right the image of the relative bijection from Section~\ref{sec:rel}.

\begin{figure}[!ht] 
   \centering
      \includegraphics[scale=.35]{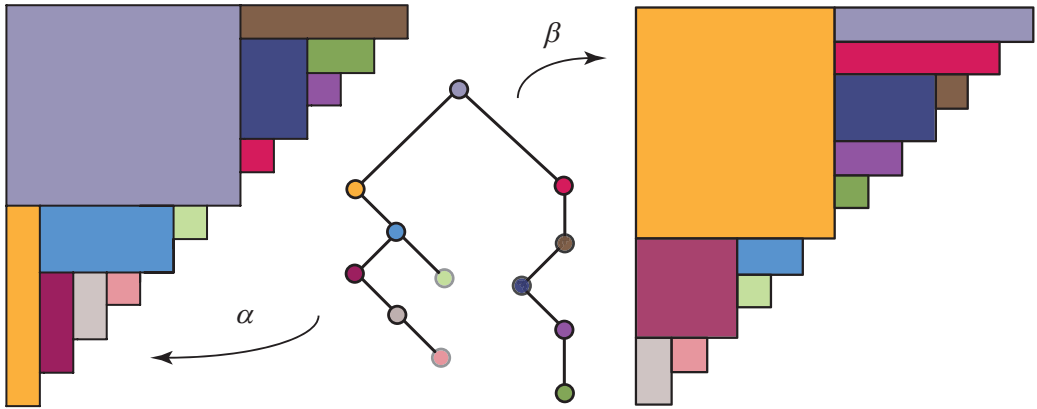}
   \caption{Two images of a right-swept tree from ${\mathbf{T}}_{12}$.}
   \label{f:compare}
\end{figure}

\bibliography{bijectbib}{}

\begin{thebibliography}{1}

\bibitem{mernik}
Matej Crepinsek and Luka Mernik.
\newblock An efficient representation for solving catalan number related
  problems.
\newblock {\em Int. J. of Pure and Applied Math.}, 56(4):589--604, 2009.

\bibitem{kim}
Jang~Soo Kim.
\newblock Front representation of set partitions.
\newblock {\em SIAM J. Discrete Math.}, 25(1):447--461, 2011.

\bibitem{reading-direc}
Shirley Law and Nathan Reading.
\newblock The hopf algebra of diagonal rectangulations.
\newblock {\em J. Combin. Theory Ser. A.}, 119(3):788--824, 2012.

\bibitem{sloane}
N.~J.~A. Sloane.
\newblock {\em The On-Line Encyclopedia of Integer Sequences}.
\newblock http://oeis.org, 2011.

\bibitem{Stan}
Richard~P. Stanley.
\newblock {\em Enumerative combinatorics. {V}ol. 2}, volume~62 of {\em
  Cambridge Studies in Advanced Mathematics}.
\newblock Cambridge University Press, Cambridge, 1999.
\newblock With a foreword by Gian-Carlo Rota and appendix 1 by Sergey Fomin.

\end{thebibliography}
\bibliographystyle{plain}

\end{document}